\theoremstyle{plain}
\newtheorem{theorem}{Theorem}[section]
\newtheorem{lemma}[theorem]{Lemma}
\newtheorem{prop}[theorem]{Proposition}
\newtheorem{cor}[theorem]{Corollary}
\theoremstyle{definition}
\newtheorem{exx}[theorem]{Example}
\newtheorem{deff}[theorem]{Definition}
\newtheorem{rmk}[theorem]{Remark}
\newcommand{\F}{\mathbb{F}}
\newcommand{\Q}{\mathbb{Q}}
\newcommand{\C}{\mathbb{C}}
\newcommand{\CR}{\mathbb{C}R_{\mathbb{C}}}
\newcommand{\E}{\mathbb{E}}
\newcommand{\Z}{\mathbb{Z}}
\newcommand{\R}{\mathbb{R}}
\newcommand{\RF}{kR_{\mathbb{F}}}
\newcommand{\RQ}{kR_{\mathbb{Q}}}
\newcommand{\RQG}{k R_{\mathbb{Q},G}}
\newcommand{\RFG}{kR_{\mathbb{F},G}}
\newcommand{\ER}{\widehat{kR_{\mathbb{Q}}}}
\newcommand{\ERG}{\widehat{kR_{\mathbb{Q},G}}}
\newcommand{\ESZERO}{\widehat{\mathcal{Z}}}
\newcommand{\ESSUP}{\widehat{\mathcal{S}}upp}
\title{Essential support of Green biset functors via morphisms.}
\author{Benjam\'in Garc\'ia}
\affil{Department of Mathematics\\
University of California\\ 
Santa Cruz, CA 95064\\ 
bgarciah@ucsc.edu}
\begin{document}
\maketitle

\begin{abstract}
We present a very natural but yet useful criterion to detect vanishing of essential algebras of a Green biset functor $A$ by means of morphisms. We introduce the morphisms $Inf:A \rightarrow A_G$ and $Res:A_G\rightarrow A$ to prove that the class of groups for which the essential algebras do not vanish is the same for $A$ and any shifted functor $A_G$. We use the kernel of $Res$ to give a characterization of the seeds of simple $A_G$-modules which are not obtained as pullback of simple $A$-modules.

\textit{Keywords:} Green biset functor, essential algebra, Yoneda-Dress construction.
\end{abstract}

\begin{otherlanguage}{french}
\begin{abstract}
Nous présentons un critère très naturel mais pourtant utile pour détecter la nullité des algèbres essentielles d'un foncteur à bi-ensembles de Green $A$ via morphismes. Nous introduisons les morphismes $Inf:A\rightarrow A_G$ et $Res:A_G\rightarrow A$ pour démonstrer que la classe des groupes pour lesquelles les algèbres essentielles ne sont pas nulles est la même pour $A$ et tous les foncteurs $A_G$. Nous utilisons le noyau de $Res$ pour donner une caractérisation des germes de ces $A_G$-modules simples qui ne sont pas obtenus comme retrait de $A$-modules simples.

\textit{Mots clés}: Foncteur à bi-ensembles de Green, algèbre essentielle, construction de Yoneda-Dress.
\end{abstract}
\end{otherlanguage}

\section{Introduction}

A natural problem to consider when studying a category of linear functors is to give a classification of its simple objects by means of parameters which help to better understand the effect of these functors on the objects and morphisms of the domain category. For simple biset functors over an admissible subcategory $\mathcal{D}$, Bouc found a solution by giving a bijection between the isomorphism classes of simple biset functors and the equivalence classes of pairs $(H,V)$ called \textit{seeds}, where $H$ is a group in $Ob(\mathcal{D})$ and $V$ is a simple $kOut(H)$-module \cite[Ch. 4]{SB1}.

The ideas for the classification of simple biset functors can be extended to the case of modules over a general Green biset functor $A$ by considering them as linear functors over the associated category $\mathcal{P}_A$. Now, a \textit{seed} in $\mathcal{P}_A$ is a pair $(H,V)$ consisting of a group $H$ for which the \textit{essential algebra} $\widehat{A}(H)$ is non-zero and a simple $\widehat{A}(H)$-module $V$. An isomorphism relation on the class of seeds is then defined, in a way that if $(H,V)$ and $(K,W)$ are isomorphic, then $H$ and $K$ are isomorphic. Then for any seed $(H,V)$ there is a simple module $S_{H,V}$ for which $H$ is a minimal group and $S_{H,V}(H)\cong V$ as $\widehat{A}(H)$-modules, isomorphic seeds give rise to isomorphic simple modules and any simple module is isomorphic to one of the form $S_{H,V}$. Bouc conjectured that the assignment $(H,V)\mapsto S_{H,V}$ would induce a one-to-one correspondence between the set of isomorphism classes of seeds and the set of isomorphism classes of simple $A$-modules, what was proved wrong later by Romero in \cite{ROM2}, where she provides an example of a simple $kB^{C_4}$-module having two non-isomorphic minimal groups. 

Romero proved in \cite[Proposition 4.2]{ROM1} that if $A$ satisfies that any two minimal groups for a simple $A$-module are isomorphic, then the correspondence  $[(H,V)]\mapsto [S_{H,V}]$ is actually a bijection between the sets of isomorphism classes. Some well-known Green biset functors satisfy this uniqueness condition, e.g., the Burnside functor $kB$ \cite{SB1}, the functor of linear representations $\RF$ for fields $k$ and $\F$ of characteristic zero and its shifted functors (\cite{LB}, \cite{paper}), and the fibered Burnside functor $kB^{C_p}$ for a prime number $p$ \cite{ROM2}. In this case, a better understanding of the correspondence between seeds and simple modules requires to go further in the study of the essential algebras and their simple modules.

This note is devoted to the problem of finding the class of groups $H$ for which $\widehat{A}(H)$ is non-zero, or as we call it, the \textit{essential support} of $A$, denoted by $\ESSUP(A)$. In Section 3, we observe that a morphism of Green biset functors $f:A\longrightarrow C$ induces a morphism of $k$-algebras $\widehat{f}:\widehat{A}(H)\longrightarrow \widehat{C}(H)$ for any $H$, proving that $\ESSUP(C)$ is contained in $\ESSUP(A)$. This criterion had already been used in \cite{paper} to prove that $\ESSUP{\RF}$ consists only of cyclic groups by means of the extension morphism. A further consequence is presented in Section 4, where we prove that $\ESSUP(A_G)=\ESSUP(A)$ for any $G$ by means of the morphisms $Inf:A\longrightarrow A_G$ and $Res:A_G\longrightarrow A$. Finally, since the restriction morphism $Res$ is surjective in any component, we use its kernel $\kappa_G$ to give a characterization of the seeds of those simple $A_G$-modules which cannot be obtained as the pullback of simple $A$-modules via $Res$.

\section{Generalities}

We provide some generalities on the theory of biset functors, for details we refer to Chapters 1 to 4 of \textit{Biset Functors for Finite Groups} by Bouc \cite{SB1}. 

Throughout this note, the letters $G$, $H$, $K$ and $L$ denote finite groups. The symbols $\Z$, $\Q$, $\R$ and $\C$ stand for the rings of integer, rational, real and complex numbers respectively. In general, if $\F$ is a field, $\overline{\F}$ stands for its algebraic closure inside of a fixed algebraically closed extension $\Omega$. All the rings and algebras are assumed to be associative and unital, and ring and algebra homomorphisms send units to units, property which is very important in the development of this text. 

Given a group $G$, a left $G$-set $X$ is a finite set on which $G$ acts on the left and a morphism of left $G$-sets is just an $G$-equivariant function between $G$-sets, while $_Gset$ stands for the category of left $G$-sets and equivariant functions. If $X$ and $Y$ are $G$-sets, their disjoint union $X\sqcup Y$ is again a $G$-set for the action of $G$ on each component, and the cartesian product $X\times Y$ is also a $G$-set for the diagonal action of $G$. The disjoint union and the product behave like sum and multiplication up to isomorphism, inducing a semiring structure on the set of isomorphism classes of $G$-sets which extends to a commutative ring structure in the Grothendieck group of $_Gset$, known as the \textit{Burnside ring} of $G$ and denoted by $B(G)$. In a similar manner one can define right $G$-sets and their category $set_G$.

If $H$ is another group, an $(H,G)$-biset is a finite set which is both a left $H$-set and a right $G$-set in a way that the actions commute, and again, a morphism of bisets is a function which respects both actions. We often write \textit{"$_HX_G$ is a biset"} to say that $X$ is an $(H,G)$-biset. If $_HX_G$ is a biset, its opposite biset is the $(G,H)$-biset $X^{op}$ whose underlying set is the same as for $X$ but with actions given by $g\cdot x\cdot h:=h^{-1}xg^{-1}$ for $x\in X^{op}$, $g\in G$ and $h\in H$. Bisets can also be added by disjoint union, and $B(H,G)$ stands for the Grothendieck group of the category $_Hset_G$ of $(H,G)$-bisets and equivariant functions. Note that $B(H,G)$ and $B(H\times G)$ are naturally isomorphic as groups.

Some important examples of bisets arise as follows: if $\psi:H\longrightarrow G$ is a group homomorphism, then $G$ becomes an $(H,G)$-biset $_{H^{\psi}}G_G$ by the rule $h\cdot g\cdot g_0=\psi(h)gg_0$ for $g\in {_{H^{\psi}}G_G}$, $h\in H$ and $g_0\in G$, while $_GG_{^{\psi}H}$ stands for its opposite. Particular cases of this are the \textit{basic bisets}: if $H\leq G$, then $Res^G_H={_{H^{\leq}}G_G}$ and $Ind^G_H={_GG_{^{\geq}H}}$ are known as the \textit{restriction from $G$ to $H$} and the \textit{induction from $H$ to $G$} respectively; if $N\unlhd G$ and $\pi:G\longrightarrow G/N$ is the canonical projection, then $Inf^G_{G/N}={_{G^{\pi}}G/N_{G/N}}$ and $Def^G_{G/N}={_{G/N}G/N_{^{\pi}G}}$ are known as the \textit{inflation from $G/N$ to $G$} and the \textit{deflation from $G$ to $G/N$ }respectively; when $\psi:H \longrightarrow G$ is an isomorphism, $_GG_{^{\psi}H}$ is often denoted by $Iso(\psi)$.

If $_KY_H$ and $_HX_G$ are bisets, there is a left action of $H$ on $Y\times X$ given by $h\cdot (y,x)=(y\cdot h^{-1},h\cdot x)$, and the \textit{composition of $Y$ with $X$}, that we denote by $Y\times_H X$ or $Y\circ X$, is the set of orbits of this action seen as a $(K,G)$-biset for $k\cdot [y,x]\cdot g=[k\cdot y,x\cdot g]$, where $k\in K$, $g\in G$ and $[y,x]$ stands for the $H$-orbit of an element $(y,x)$ of $Y\times X$. Composition of bisets commutes with disjoint union and it is associative up to isomorphism, and also $H\circ X\cong X\cong X\circ G$ for any biset $_HX_G$. Passing to Burnside groups, this operation induces a family of biadditive applications $B(K,H)\times B(H,G)\longrightarrow B(K,G)$, denoted by $(\beta,\alpha)\mapsto \beta\circ \alpha$, for all groups $K$, $H$ and $G$, which naturally leads to the definition of the \textit{biset category} $\mathcal{C}$, whose objects are all finite groups with hom-sets given by $\mathcal{C}(G,H)= B(H,G)$ and identity element $Id_G=[G]$ in $B(G,G)$ for the composition just defined. It is immediate that the biset category is a preadditive category, and as such it is generated by the isomorphism classes of the basic bisets.

\begin{deff}
Let $\mathcal{D}$ be a preadditive subcategory of $\mathcal{C}$ and $k$ be a commutative ring, and let $k\mathcal{D}$ denote the $k$-linearization of $\mathcal{D}$. A \textit{biset functor for $\mathcal{D}$ over $k$} is a $k$-linear from $k\mathcal{D}$ to $k-Mod$. We write $\mathcal{F}_{\mathcal{D},k}$ for the category of biset functors for $\mathcal{D}$ over $k$ and morphisms given by natural transformations.
\end{deff}

Biset functors provide a general framework for the study of the functorial properties of many reprentation groups and rings arising in representation theory of finite groups. Some well-known  examples are the Burnside functor $kB$, the functor of linear representations $\RF$, the monomial Burnside functor $kD$, the functor $B^{\times}$ of units of the Burnside ring and global Mackey functors.

\section{Green biset functors and essential support}

A Green biset functor is a biset functor with an additional multiplicative structure. Although there are many equivalent definitions, the one that we provide is more convenient for the treatment of the simple objects in its category of modules. For more details, we refer to Bouc \cite[Chapter 8]{SB1} or Romero \cite{ROM1}.

Let $\mathcal{D}$ be a full subcategory of $\mathcal{C}$ with the property that any group isomorphic either to a subquotient of an object of $\mathcal{D}$ or to a product of two objects of $\mathcal{D}$ is an object of $\mathcal{D}$ too. Examples of such subcategories are the whole biset category and the full subcategories having by objects all nilpotent groups, $p$-groups and $p'$-groups for a prime number $p$, respectively. 

\begin{deff} \thlabel{GBF}
For any groups $L$, $K$ and $H$, let $\alpha_{L,K,H}:L\times (K\times H)\longrightarrow (L \times K)\times H$, $\lambda_H:1\times H\longrightarrow H$ and $\rho_H: H\times 1\longrightarrow H$ be the canonical isomorphisms. A \textit{Green biset functor} is a biset functor $A$ together with \textit{bilinear products} 
$$\xymatrix{{A(K)\times A(H)}\ar[r]^-{\times} &{A(K \times H)}},$$ 
denoted by $(a,b)\mapsto a\times b$, for any pair of objects $K$ and $H$ of $k\mathcal{D}$, and an element $\epsilon_A \in A(1)$, satisfying the following conditions:
\begin{enumerate}
\item (Associativity) Let $L$, $K$ and $H$ be objects of $k\mathcal{D}$. Then for all $a\in A(L)$, $b\in A(K)$ and $c\in A(H)$,
$$(a\times b)\times c=A(Iso(\alpha_{L,K,H}))(a\times (b\times c)).$$
\item (Identity element) Let $H$ be an object of $k\mathcal{D}$. Then for any $a \in A(H)$,
$$A(Iso(\lambda_H))(\epsilon_A \times a)=a=A(Iso(\rho_H))(a \times \epsilon_A).$$
\item (Functoriality) If $\alpha: K\rightarrow T$ and $\beta:H\rightarrow L$ are morphisms in $k\mathcal{D}$, then for all $a\in A(K)$ and $b\in A(H)$,
$$A(\alpha \times \beta)(a\times b)=A(\alpha)(a) \times A(\beta )(b).$$
\end{enumerate}

If $C$ is another Green biset functor, a \textit{morphism of Green biset functors} is a morphism of biset functors $f:A\longrightarrow C$ such that $f_{K\times H}(a \times b)= f_K(a) \times f_H(b)$ for all $a \in A(K)$ and $b \in A(H)$, and $f_1(\epsilon_A)=\epsilon_C$.
\end{deff}

An equivalent definition states that a biset functor $A$ is a Green biset functor if $A(H)$ is a $k$-algebra for any $H$, plus some  conditions on the basic morphisms. Such structure can be obtained from \thref{GBF} by setting a multiplication
$$ab:=A(Iso(\delta_H^{-1})\circ  Res^{H\times H}_{\Delta(H)})(a\times b)$$
in $A(H)$, for all $a,b\in A(H)$, where $\delta_H:H\longrightarrow \Delta(H)$ is the natural isomorphism, for which the element $A(Inf_1^H)(\epsilon)$ acts as the identity. See Bouc \& Romero \cite[Subsection 1.1]{SerNad} for a clear exposition on this equivalence.

\begin{exx}
The Burnside functor is a Green biset functor for the maps $kB(K)\times kB(H)\longrightarrow kB(K\times H)$ induced by $([Y],[X])\mapsto [Y\times X]$ for any $Y\in\; _Kset$ and $X\in\; _Hset$. The functor $kB$ is an initial object in the category of Green biset functors.
\end{exx}

\begin{exx}
If $char \F=0$, the functor of $\F$-linear representations $\RF$ is a Green biset functor for the bilinear maps induced by external products of modules. The $H$-th component of the only morphism of Green biset functors $\lambda: kB\longrightarrow \RF$ sends the isomorphism class of an $H$-set $X$ to the character of the permutation module $\F X$ and it is known as the \textit{linearization morphism}. If $\E/ \F$ is a field extension, the extension of scalars from $\F$ to $\E$ on $\F H$-modules induces a map ${^{\E}\eta_H}={^{\E/\F}\eta_H}:\RF(H)\longrightarrow kR_{\E}(H)$ for any $H$, these maps define a morphism of Green biset functors ${^{\E}\eta}={^{\E/\F}\eta}: \RF \longrightarrow kR_{\E}$, known as the \textit{$\E$-extension morphism} (see G. \cite[Section 3]{paper}).
\end{exx}

\begin{exx}
A \textit{left ideal} $I$ of a Green biset functor $A$ is a subfunctor such that $a \times x \in I(K\times H)$ for any $a\in A(K)$ and $x\in I(H)$, and \textit{right} and \textit{two-sided} ideals are defined in a similar way. The kernel of a morphism of Green biset functors is always a two-sided ideal of the domain. It is straightforward to see that $I$ is a left (resp. right, two-sided) ideal if and only if it is a subfunctor of $A$ such that $I(H)$ is a left (resp. right, two-sided) ideal of $A(H)$ for any $H$.
\end{exx}

Other examples of Green biset functors are the \textit{functor of monomial representations} $kD$, or more general, the \textit{$A$-fibered Burnside functor} $kB^A$ for an abelian group $A$ (since $kD\cong kB^{\C^{\times}}$) \cite{BolCos}, and the functor $kpp_{\F}$ of rings of $p$-permutation modules for algebraically closed fields $k$ and $\F$ with $char k=0$ and $char \F=p>0$ \cite{MD}.

\begin{deff}
If $H$ is a finite group, we write $\overrightarrow{H}$ for the $(H\times H,1)$-biset $H$ with actions $(h_1,h_2)\cdot h \cdot 1=h_1hh^{-1}_2$ for $h\in H$ and $(h_1,h_2)\in H\times H$, while $\overleftarrow{H}$ stands for its opposite. Let $A$ be a Green biset functor. The \textit{associated category of $A$}, denoted by $\mathcal{P}_A$, consists of the following data: 
\begin{itemize}
\item The objects of $\mathcal{P}_A$ are the same objects of $\mathcal{D}$.
\item If $K$ and $H$ are objects of $\mathcal{P}_A$, then $\mathcal{P}_A(H,K)=A(K\times H)$.
\item If $L$, $K$ and $H$ are objects of $\mathcal{P}_A$, then the composition of $\beta \in A(L\times K)$ and $\alpha \in A(K\times H)$ is defined as $\beta \circ \alpha=A(L\times \overleftarrow{K} \times H)(\beta \times \alpha)$.
\item For an object $H$ of $\mathcal{P}_A$, the identity morphism is $Id_H=A(\overrightarrow{H})(\epsilon_A) \in A(H\times H)$.
\end{itemize}
\end{deff}

It follows easily that the associated category is $k$-linear. The associated category was introduced by Bouc as a generalization of the biset category, and it turns out that $\mathcal{P}_{kB}$ and $k\mathcal{D}$ are isomorphic. We now get to the definition of module over a Green biset functor. For other definitions, see  Bouc \& Romero \cite[Subsection 1.2]{SerNad} or G. \cite[Section 2]{paper}.

\begin{deff}
A (left) \textit{$A$-module} is a $k$-linear functor from $\mathcal{P}_A$ to $k-Mod$. We write $A-\mathcal{M}od$ for the category of $A$-modules with morphisms given by natural transformations.
\end{deff}

The category $A-\mathcal{M}od$ is an abelian category, with direct sums, kernels and cokernels defined pointwise, and it generalizes $\mathcal{F}_{\mathcal{D},k}$, since the last is equivalent to $kB-\mathcal{M}od$. The functor $A$ is itself an $A$-module $_AA$ by the rule $_AA(\alpha)(a)=A(K\times \overleftarrow{H})(\alpha\times a)$ for $\alpha\in A(K\times H)$ and $a\in A(H)$. This structure can be restricted to left ideals and so these are exacly the $A$-submodules of $A$. 

Simple biset functors can be parametrized by means of equivalence pairs $(H,V)$ consisting of a group $H$ and a simple $kOut(H)$-module $V$. An attempt to generalize this technique for the classification of simple $A$-modules leads to the notions of \textit{minimal group}, \textit{essential algebra} and \textit{seeds}.

\begin{deff}
Let $M$ be an $A$-module. An object $H$ of $\mathcal{P}_A$ is a \textit{minimal group} for $M$ if $0\neq M(H)$ and $0=M(K)$ for any $K$ in $Ob(\mathcal{P}_A)$ such that $|K|<|H|$.
\end{deff}

Note that this definition agrees with the notion of minimal group for biset functors since $A$-modules are actually biset functors with an additional structure. 

\begin{deff}
Let $A$ be a Green biset functor and $H$ be an object of $\mathcal{P}_A$. The \textit{essential algebra of $A$ at $H$} is defined as the quotient
$$\widehat{A}(H)=\frac{End_{\mathcal{P}_A}(H)}{I_A(H)}$$
where $I_A(H)$ is the $k$-submodule of $A(H\times H)$ generated by the endomorphisms of $H$ which factor through groups of order strictly smaller than $|H|$. We write $\widehat{a}$ for the class of an element $a$ of $A(H\times H)$ in $\widehat{A}(H)$.
\end{deff}

The submodule $I_A(H)$ is a two-sided ideal of $End_{\mathcal{P}_A}(H)$, so $\widehat{A}(H)$ is in fact a $k$-algebra with identity $\widehat{Id_H}$. Nevertheless, $\widehat{A}(H)$ may still vanish for some groups $H$.

\begin{exx}
A \textit{seed} on $\mathcal{P}_A$ is a pair $(H,V)$ consisting of a group $H$ in $Ob(\mathcal{D})$ such that $0\neq \widehat{A}(H)$ and $V$ is a simple $\widehat{A}(H)$-module.
\end{exx}

If $(H,V)$ is a seed on $\mathcal{P}_A$, there is a way to construct a simple $A$-module $S_{H,V}$ having $H$ as a minimal group and $S_{H,V}(H)\cong V$ as a $\widehat{A}(H)$-modules. Furthermore, any simple $A$-module is isomorphic to one of this kind. There is an adequate notion of isomorphism of seeds, and if it holds that minimal groups for $A$-modules are unique up to group isomorphism, the assignation $(H,V)\mapsto S_{H,V}$ induces a bijection between the set of isomorphism classes of seeds and the set of isomorphism classes of simple modules. We refer to Romero \cite{ROM1} for details. 

\begin{rmk}
A parametrization by means of isomorphism classes of seeds is not always possible, as it was shown in Romero \cite[Section 2.1]{ROM2} for the fibered Burnside functor $kB^{C_4}$. Of course, seeds are not the only way to generate simple modules. Boltje and Coşkun provided in \cite{BolCos} a parametrization of the simple $kB^{A}$-modules by means of equivalence classes of cuadruples $(H,K,\kappa,[V])$ under the relation of \textit{linkage}.
\end{rmk}

By the previous paragraphs, a good first step for a classification of simple modules by classes of seeds is to determine whether $\widehat{A}(H)$ is zero or not for a given $H$. This motivates the following definition.

\begin{deff}
For a Green biset functor $A$, its \textit{essential support} is defined to be the subclass $\ESSUP(A)$ of $Ob(\mathcal{D})$ consisting of all the objects $H$ for which $0\neq \widehat{A}(H)$, and its \textit{essential nullity} $\ESZERO(A)$ to be the complement class of $\ESSUP(A)$.
\end{deff}

For the Burnside functor, $\widehat{kB}(H) \cong kOut(H)$ for any $H$, therefore $\ESSUP(kB)=Ob(\mathcal{D})$. We recall some other examples in the language of essential supports.

\begin{exx}
The functor of rational representations $\RQ$ and its modules, known as rhetorical biset functors, have been studied by Barker in \cite{LB}. He proved that when $k$ is a field of characteristic zero, $H$ lies in $\ESSUP(\RQ)$ if and only if $H$ is cyclic and there exists a primitive $k$Out$(H)$-module. Further computations show that $H$ lies in $\ESSUP(\RQ)$ if and only if $H$ is cyclic and $|H|\not\equiv 2\; mod\; 4$ (G. \cite[Lemma 3.17]{paper}).
\end{exx}

\begin{exx}\thlabel{clsfunctsupp}
For the functor $\CR$ of complex class functions, $\ESSUP(\CR)$ is just the isomorphism class of the trivial group, as it was proved by Romero in \cite[Proposition 4.3]{ROM1}.
\end{exx}

\begin{exx}
The functor $kpp_{\F}$ of rings of $p$-permutation modules for algebraically closed fields $k$ and $\F$ with $char\;k=0$ and $char\;\F=p>0$ has been studied by Ducellier in \cite{MD}. He proved in \cite[Proposition 4.1.2]{MD} that if $H\in \ESSUP(kpp_{\F})$, then $H= P\langle x\rangle$ for a normal $p$-subgroup $P$ of $H$ and a $p'$-element $x$ in $H$ such that $C_{\langle x\rangle }(P)=1$.
\end{exx}

\begin{exx}
For a general Green biset functor $0\neq A$, $\ESSUP(A)$ is never empty, since $0\neq A(1)\cong \widehat{A}(1)$ (G. \cite[Lemma 2.12]{paper}), and so $1\in \ESSUP(A)$. Moreover, the simple $A$-modules having trivial minimal group are in bijection with the simple $A(1)$-modules.
\end{exx}

Morphisms of Green biset functors may throw some light on the essential supports. If $f:A\longrightarrow C$ is a morphism of Green biset functors, the following properties can be easily verified:

\begin{enumerate}
\item $f_{L\times H}(\beta \circ \alpha) =f_{L\times K}(\beta)\circ f_{K\times H}( \alpha)$ for all morphisms $K \xrightarrow{\beta} L$ and $H \xrightarrow{\alpha} K$ in $\mathcal{P}_A$,
\item $f_{H\times H}(A(\overrightarrow{H})(\epsilon_A))=C(\overrightarrow{H})(\epsilon_C)$ for any $H \in Ob(\mathcal{P}_A)$.
\end{enumerate}
 
As a consequence, $f$ induces a $k$-linear functor between the associated categories
$$\mathcal{P}_f: \mathcal{P}_A \longrightarrow \mathcal{P}_C$$ 
by setting $\mathcal{P}_f(H)=H$ for any $H$ in $Ob(\mathcal{P}_A)$ and $\mathcal{P}_f(\alpha)=f_{K\times H}(\alpha)$ for any morphism $H \xrightarrow{\alpha} K$ in $\mathcal{P}_{A}$, which on its own induces a \textit{pullback functor}
$$f^*:C-\mathcal{M}od\longrightarrow A-\mathcal{M}od$$
given by $f^*M(H)=M(\mathcal{P}_f(H))=M(H)$ and $f^*M(\alpha)=M(\mathcal{P}_f(\alpha))=M(f(\alpha))$ for any $C$-module $M$. Also, for any $H$ we get a morphism of $k$-algebras between the endomorphism algebras
$$P_f:End_{\mathcal{P}_A}(H)\longrightarrow End_{\mathcal{P}_C}(H)$$
given by $\alpha \mapsto f_{H\times H}(\alpha)$, which by property (1) sends $I_A(H)$ into $I_C(H)$, thus inducing a homomorphism of $k$-algebras between the essential algebras 
$$\widehat{f}_H:\widehat{A}(H)\longrightarrow \widehat{C}(H)$$
by the rule $\widehat{\alpha}\mapsto \widehat{f_{H\times H}(\alpha)}$. This implies that if $0=\widehat{A}(H)$, then $0=\widehat{C}(H)$, or equivalently, if $0\neq \widehat{C}(H)$, then $0\neq \widehat{A}(H)$. In terms of essential supports, the previous discussion proves the following result.

\begin{lemma} \thlabel{ESSUPmorph}
If $f:A\longrightarrow C$ is a morphism of Green biset functors, then $\ESSUP(C)\subset \ESSUP(A)$ and $\ESZERO(A)\subset \ESZERO(C)$.
\end{lemma}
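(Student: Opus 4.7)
The plan is to promote the chain of observations laid out in the paragraphs preceding the lemma into an actual proof, whose decisive step is the unitality of the induced map on essential algebras. More precisely, the strategy is to show that $f$ induces a $k$-linear functor $\mathcal{P}_f:\mathcal{P}_A\to\mathcal{P}_C$ which is the identity on objects, extract from it a unital $k$-algebra morphism $\widehat{f}_H:\widehat{A}(H)\to\widehat{C}(H)$ for every $H$, and then observe that a unital ring morphism out of the zero ring forces its codomain to vanish.

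First, I would verify the two bulleted properties stated just before the lemma. Property (2), $f_{H\times H}(A(\overrightarrow{H})(\epsilon_A))=C(\overrightarrow{H})(\epsilon_C)$, is immediate from naturality of $f$ applied to the biset $\overrightarrow{H}$ together with $f_1(\epsilon_A)=\epsilon_C$. Property (1) unpacks the definition $\beta\circ\alpha=A(L\times\overleftarrow{K}\times H)(\beta\times\alpha)$ and uses both the naturality of $f$ and the multiplicativity axiom $f_{K\times H}(a\times b)=f_K(a)\times f_H(b)$ from \thref{GBF}. These two facts say exactly that $P_f:End_{\mathcal{P}_A}(H)\to End_{\mathcal{P}_C}(H)$, $\alpha\mapsto f_{H\times H}(\alpha)$, is a unital $k$-algebra morphism. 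Next I would check that $P_f(I_A(H))\subseteq I_C(H)$: if $\alpha=\beta\circ\gamma$ factors through an object $K$ with $|K|<|H|$, then $P_f(\alpha)=P_f(\beta)\circ P_f(\gamma)$ factors through the same $K$ in $\mathcal{P}_C$, by property (1). Hence $P_f$ descends to a unital $k$-algebra morphism $\widehat{f}_H:\widehat{A}(H)\to\widehat{C}(H)$ given by $\widehat{\alpha}\mapsto\widehat{f_{H\times H}(\alpha)}$.

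With $\widehat{f}_H$ in hand the conclusion is forced. Suppose $H\in\ESZERO(A)$, i.e.\ $\widehat{A}(H)=0$. Then in $\widehat{A}(H)$ the identity $\widehat{Id_H}$ is zero, and since $\widehat{f}_H$ is unital we get $\widehat{Id_H}=\widehat{f}_H(\widehat{Id_H})=\widehat{f}_H(0)=0$ in $\widehat{C}(H)$; a unital $k$-algebra in which the multiplicative identity is zero is the zero algebra, so $\widehat{C}(H)=0$ and $H\in\ESZERO(C)$. This gives both inclusions $\ESZERO(A)\subseteq\ESZERO(C)$ and, by complementation, $\ESSUP(C)\subseteq\ESSUP(A)$. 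There is no serious obstacle; the only point worth stressing is that without unitality one would only obtain $\widehat{f}_H=0$, which is far weaker than $\widehat{C}(H)=0$, so the convention fixed in the preliminaries that ring and algebra homomorphisms preserve units is precisely what makes the argument go through.
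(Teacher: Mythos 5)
Your proof is correct and follows essentially the same route as the paper: verify that $f$ induces a unital $k$-algebra homomorphism $\widehat{f}_H:\widehat{A}(H)\to\widehat{C}(H)$ via properties (1) and (2), then use unitality to conclude that $\widehat{A}(H)=0$ forces $\widehat{C}(H)=0$. Your closing remark on why unitality is the decisive ingredient matches the emphasis the paper places on that convention in the preliminaries.
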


This result, although easy, turns out to be very helpful when looking for hints on the essential supports. It has already been used implicitly in the proof of G. \cite[Proposition 3.2.1]{paper}, which states that $\ESSUP(\RF)\subset \ESSUP(\RQ)$, by considering the extension morphism $^{\F}\eta:\RQ\longrightarrow \RF$.

\section{Shifted Green biset functors}

If $L$, $K$, $H$ and $G$ are objects in $\mathcal{D}$, and $_LY_K$ and $_HX_G$ are bisets, then the \textit{external product of $Y$ and $X$} is the product $Y\times X$ as a $(L\times H,K\times G)$-biset by the rule $(l,h)\cdot(y,x)\cdot(k,g)=(l\cdot y\cdot h,h\cdot x\cdot g)$ for $(y,x)\in Y\times X$, $(l,h)\in L\times H$ and $(k,g)\in K\times G$. This construction extends to a bilinear map $kB(L,K)\times kB(H,G)\longrightarrow kB(L\times H,K\times G)$, denoted by $(\alpha,\beta)\mapsto \alpha\times \beta$.

\begin{deff}
Let $G$ be an object of $k\mathcal{D}$ and $F$ be a biset functor. The \textit{Yoneda-Dress construction of $F$ at $G$}, or \textit{$F$ shifted by $G$} for short, is defined to be the biset functor $F_G$ whose value at an object $H$ of $k\mathcal{D}$ is $F_G(H)=F(H\times G)$, and $F_G(\alpha)=F(\alpha\times Id_G): F_G(H)\longrightarrow F_G(K)$ for any morphism $\alpha:H\longrightarrow K$ in $k\mathcal{D}$.
\end{deff}

The Yoneda-Dress construction at $G$ defines then an endofunctor $P_G: \mathcal{F}_{\mathcal{D},k}\longrightarrow \mathcal{F}_{\mathcal{D},k}$, given by $P_G(F)=F_G$ for any biset functor $F$, and $P_G(f):F_G\longrightarrow F'_G$ for any morphism of biset functors $f:F\longrightarrow F'$, whose $H$-th component arrow is $P_G(f)_H=f_{H\times G}$ for any $H$ in $k\mathcal{D}$. 

For a general Green biset functor $A$ and any object $G$ of $\mathcal{D}$, the shifted functor $A_G$ is a Green biset functor for the products $\times^d:A_G(K)\times A_G(H) \longrightarrow A_G(K\times H)$ given by 
$$a\times^d b=A(_{K\times H\times G^{\Delta}}K\times G\times H\times G_{K\times G\times H\times G})(a\times b)$$
for all $a\in A(K\times G)$ and $b\in A(H\times G)$ and all $K,H\in Ob(\mathcal{D})$, and with identity element $\epsilon_{A_G}=A(Inf^{1\times G}_1)(\epsilon_A)$. 

Recently, there have been an increasing interest on the shifted Green biset functors $A_G$, as these are both Green biset functors and projective $A$-modules, and as $G$ runs over a set of representatives of the isomorphim classes of objects of $\mathcal{D}$, we get a set of projective generators of $A-\mathcal{M}od$ (see G. \cite[Subsection 2.3]{paper}). At the same time, little is known on their essential algebras. Nevertheless, we can prove that $\ESSUP(A_G) = \ESSUP(A)$ for any $G$. To do this, let introduce a morphism of Green biset functors from $A$ to $A_G$.

\begin{prop} \thlabel{INF}
Let $A$ be a Green biset functor and $G$ be an object of $\mathcal{D}$. Then the morphisms 
$$Inf_H=A(Inf_H^{H\times G}):A(H)\longrightarrow A(H\times G)$$
for $H$ in $Ob(\mathcal{P}_A)$ define a morphism of Green biset functors $Inf: A\longrightarrow A_G$.
\end{prop}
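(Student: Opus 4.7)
The plan is to verify the three defining conditions of a morphism of Green biset functors: (i) $Inf$ is a natural transformation between the underlying biset functors, (ii) it intertwines the bilinear products, and (iii) it maps $\epsilon_A$ to $\epsilon_{A_G}$. Condition (iii) is immediate from the definition $\epsilon_{A_G}=A(Inf_1^{1\times G})(\epsilon_A)$, which coincides with $Inf_1(\epsilon_A)$.

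For naturality, the key observation is the biset isomorphism
$$Inf_H^{H\times G}\;\cong\; Id_H \times Inf_1^G,$$
where $Inf_1^G$ denotes the singleton $(G,1)$-biset; both sides are $H$ as an $(H\times G, H)$-biset on which the $G$-factor acts trivially on the left. Using functoriality of the external product of bisets, for any biset $_KX_H$ representing a morphism $H\to K$ in $\mathcal{D}$ one obtains
$$(X\times Id_G)\circ Inf_H^{H\times G}\;\cong\; X\times Inf_1^G \;\cong\; Inf_K^{K\times G}\circ X.$$
Applying $A$ and invoking its functoriality yields the required identity $A_G(X)\circ Inf_H = Inf_K\circ A(X)$ on generators, and $k$-linearity extends it to arbitrary morphisms of $k\mathcal{D}$.

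For multiplicativity, write $\sigma_{K,H}$ for the shuffle biset $_{K\times H\times G^{\Delta}}(K\times G\times H\times G)_{K\times G\times H\times G}$ that defines $\times^d$. Unfolding $Inf_K(a)\times^d Inf_H(b)$, applying axiom (3) of \thref{GBF} to commute the external product of elements with the internal $A$s, and using functoriality of $A$, one obtains
$$Inf_K(a)\times^d Inf_H(b)\;=\; A\bigl(\sigma_{K,H}\circ (Inf_K^{K\times G}\times Inf_H^{H\times G})\bigr)(a\times b).$$
The proof then reduces to the biset identification
$$\sigma_{K,H}\circ\bigl(Inf_K^{K\times G}\times Inf_H^{H\times G}\bigr)\;\cong\; Inf_{K\times H}^{K\times H\times G},$$
which is the main technical step. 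I would verify it by a direct orbit computation: the orbits on the left-hand side have unique representatives of the form $[e,(k,h)]$ with $(k,h)\in K\times H$, and tracking the induced actions one sees that the left $(K\times H\times G)$-action factors through the projection to $K\times H$ while the right $K\times H$-action is by multiplication, yielding exactly the inflation biset on the right. Once this identification is in hand, the right-hand side above equals $A(Inf_{K\times H}^{K\times H\times G})(a\times b)=Inf_{K\times H}(a\times b)$, completing the argument. The main obstacle is this biset-level identification; everything else is formal manipulation of the Green biset functor axioms.
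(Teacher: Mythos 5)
Your proof is correct and follows essentially the same route as the paper: condition (iii) is immediate from the definition of $\epsilon_{A_G}$, the multiplicativity reduces to the same biset-level identification $\sigma_{K,H}\circ(Inf_K^{K\times G}\times Inf_H^{H\times G})\cong Inf_{K\times H}^{K\times H\times G}$ (the paper records the explicit isomorphism $[(k_1,g_1,h_1,g_2),(k_2,h_2)]\mapsto(k_1k_2,h_1h_2)$ where you describe the orbit computation in prose), and for naturality you verify the square directly via $Inf_H^{H\times G}\cong Id_H\times Inf_1^G$ whereas the paper cites Bouc's functoriality of the Yoneda--Dress construction (8.2.4 and 8.2.7), which packages the same underlying fact that external products of bisets commute with composition.
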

\begin{proof}
We prove first that $Inf$ is a morphism of biset functors. By Bouc \cite[8.2.4 \& 8.2.7]{SB1}, 
$$Id_{\mathcal{F}_{\mathcal{D},k}}\cong P_1 \xrightarrow{P_{Inf^G_1}} P_G: \mathcal{F}_{\mathcal{D},k} \longrightarrow \mathcal{F}_{\mathcal{D},k}$$
is a natural transformation, whose $A$-th component arrow is the morphism of biset functors
$$A\xrightarrow{A(\_\times Inf^G_1)\circ A(Iso(\rho_{\_}^{-1}))} A_G,$$
with $H$-th component arrow $A(H\times Inf^G_1)\circ A(Iso(\rho_H^{-1}))=A(Inf^{H\times G}_H)=Inf_H$, therefore $Inf=A(\_\times Inf^G_1)\circ A(Iso(\rho_{\_}^{-1}))$.

Now we prove that $Inf$ is actually a morphism of Green biset functors. If $a\in A(K)$ and $b\in A(H)$, then
$$Inf_{K\times H}(a\times b)=A(Inf^{K\times H\times G}_{K\times H})(a\times b)$$
and
$$Inf_K(a)\times^dInf_H(b)=A(_{K\times H\times G^{\Delta}}K\times G\times H\times G_{K\times G\times H\times G}\circ Inf^{K\times G\times H\times G}_{K\times H})(a\times b),$$
and it is straightforward to verify that the map
$$_{K\times H\times G^{\Delta}}K\times G\times H\times G_{K\times G\times H\times G}\circ Inf^{K\times G\times H\times G}_{K\times H} \longrightarrow Inf^{K\times H\times G}_{K\times H}$$
$$[(k_1,g_1,h_1,g_2),(k_2,h_2)]\mapsto (k_1k_2,h_1h_2)$$
is an isomorphism of $(K\times H\times G,K\times H)$-bisets, so it follows that $Inf_{K\times H}(a\times b)=Inf_K(a)\times^dInf_H(b)$, and we already had that $\epsilon_{A_G}=A(Inf^{1\times G}_1)(\epsilon_A)=Inf_1(\epsilon_A)$.
\end{proof}

We call $Inf$ the \textit{inflation morphism from $A$ to $A_G$}. Now we give a morphism in the other direction. For simplicity, we will write $Res^{H\times G}_H$ instead of $Iso(\rho_H)\circ Res^{H\times G}_{H\times 1}$ for any $H\in Ob(\mathcal{D})$.

\begin{prop} \thlabel{RES}
Let $A$ be a Green biset functor and $G$ be an object of $\mathcal{D}$. Then the morphisms
$$Res_H=A(Res_H^{H\times G}):A(H\times G)\longrightarrow A(H)$$
for $H$ in $Ob(\mathcal{P}_A)$ define a morphism of Green biset functors $Res: A_G \longrightarrow A$.
\end{prop}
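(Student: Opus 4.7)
The proof parallels that of Proposition \thref{INF}. I will proceed in three steps: first verify that $Res$ is a morphism of biset functors, then check compatibility with the bilinear products, and finally check that the identity elements match.

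For naturality, by the same construction as in Bouc \cite[8.2.4 \& 8.2.7]{SB1}, applied now to the basic biset $Res^G_1$ instead of $Inf^G_1$, there is a natural transformation $P_G \to P_1 \cong Id_{\mathcal{F}_{\mathcal{D},k}}$ whose $A$-component morphism $A_G \to A$ has $H$-th component arrow
$$A(Iso(\rho_H)) \circ A(H \times Res^G_1) = A\bigl(Iso(\rho_H) \circ Res^{H\times G}_{H\times 1}\bigr) = A(Res^{H\times G}_H) = Res_H,$$
so $Res = A(Iso(\rho_{\_})) \circ A(\_ \times Res^G_1)$ is a morphism of biset functors.

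For multiplicativity, if $a\in A(K\times G)$ and $b\in A(H\times G)$, a direct computation using the definition of $\times^d$ and the functoriality of $A$ gives
$$Res_{K\times H}(a\times^d b) = A\bigl(Res^{K\times H\times G}_{K\times H}\circ {}_{K\times H\times G^{\Delta}}K\times G\times H\times G_{K\times G\times H\times G}\bigr)(a\times b),$$
while functoriality of the external product yields
$$Res_K(a)\times Res_H(b) = A\bigl(Res^{K\times G}_K \times Res^{H\times G}_H\bigr)(a\times b).$$
Both operands of $A$ are $(K\times H,\, K\times G\times H\times G)$-bisets, and I will exhibit an isomorphism between them: choosing the standard representative in each orbit of the diagonal $K\times H\times G$-action in the composition, the map
$$[(k_1,h_1,g_1),(k_2,g_2,h_2,g_3)] \longmapsto (k_1k_2,\, g_1g_2,\, h_1h_2,\, g_1g_3)$$
is a bijection intertwining the left $K\times H$-action and the right $K\times G\times H\times G$-action. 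For the identity, since $\epsilon_{A_G} = Inf_1(\epsilon_A) = A(Inf^{1\times G}_1)(\epsilon_A)$ and $Res^{1\times G}_1 \circ Inf^{1\times G}_1 \cong Id_1$ as $(1,1)$-bisets, one obtains $Res_1(\epsilon_{A_G}) = A(Res^{1\times G}_1 \circ Inf^{1\times G}_1)(\epsilon_A) = \epsilon_A$.

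The main obstacle is the bookkeeping for the biset isomorphism in the multiplicativity check, where one must keep track of the four coordinates in $K\times G\times H\times G$ together with the diagonal $G$-identifications coming from $\Delta$. Once a standard orbit representative is chosen, however, the verification becomes mechanical and exactly mirrors the corresponding isomorphism exhibited in the proof of Proposition \thref{INF}.
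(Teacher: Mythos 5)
Your proposal is correct and follows essentially the same route as the paper's own proof: naturality via Bouc \cite[8.2.4 \& 8.2.7]{SB1} applied to $Res^G_1$, multiplicativity via the explicit isomorphism of $(K\times H,\,K\times G\times H\times G)$-bisets, and the identity check via $Res^{1\times G}_1\circ Inf^{1\times G}_1\cong Id_1$. The biset isomorphism you exhibit is the same one the paper gives (your coordinate labelling is in fact cleaner than the paper's, which appears to have a typo in the ordering of $h_i$, $k_i$).
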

\begin{proof}
First we prove that $Res$ is a morphism of biset functors. By Bouc \cite[8.2.4 \& 8.2.7]{SB1}, 
$$P_G \xrightarrow{P_{Res^G_1}} P_1\cong Id_{\mathcal{F}_{\mathcal{D},k}}: \mathcal{F}_{\mathcal{D},k} \longrightarrow \mathcal{F}_{\mathcal{D},k}$$
is a natural transformation whose $A$-th component arrow is the morphism of biset functors
$$A_G\xrightarrow{A(Iso(\rho_{\_}))\circ A(\_\times Res^G_1)} A,$$
with $H$-th component given by $A(Iso(\rho_{H}))\circ A(H\times Res^G_1)=A(Res^{H\times G}_H)=Res_H$, and so $Res=A(Iso(\rho_{\_}))\circ A(\_\times Res^G_1)$.

Now we prove that $Res$ is a morphism of Green biset functors. If $a\in A(K\times G)$ and $b\in A(H\times G)$, then
$$Res_{K\times H}(a\times^d b)=A(Res^{K\times H\times G}_{K\times H}\circ _{K\times H\times G^{\Delta}}K\times G\times H\times G_{K\times G\times H\times G})(a\times b),$$
$$Res_K(a)\times Res_H(b)=A(Res^{K\times G}_{K}\times Res^{H\times G}_{H})(a\times b)=A(Res^{K\times G\times H\times G}_{K\times H})(a\times b),$$
and it is straightforward that
$$Res^{K\times H\times G}_{K\times H}\circ _{K\times H\times G^{\Delta}}K\times G\times H\times G_{K\times G\times H\times G} \longrightarrow Res^{K\times G\times H\times G}_{K\times H}$$
$$[(h_1,k_1,g_1),(h_2,g_2,k_2,g_3)]\mapsto (h_1h_2,g_1g_2,k_1k_2,g_1g_3)$$
is an isomorphism of $(K\times H,K\times G\times H\times G)$-bisets, therefore $Res_{K\times H}(a\times^d b)=Res_K(a)\times Res_H(b)$. Finally, $Res_1(\epsilon_{A_G})= A(Res^{1\times G}_1\circ Inf^{1\times G}_1)(\epsilon_A)=\epsilon_A$ since $Res^{1\times G}_1\circ Inf^{1\times G}_1\cong 1$.
\end{proof}

We call $Res$ the \textit{restriction morphism from $A_G$ to $A$}. The morphisms $Inf$ and $Res$ together with \thref{ESSUPmorph} imply the following result.

\begin{cor} \thlabel{shiftedSUP}
Let $A$ be a Green biset functor. Then $\ESSUP(A_G)=\ESSUP(A)$ for all $G$ in $Ob(\mathcal{D})$.
\end{cor}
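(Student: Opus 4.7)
The plan is to combine the two morphisms of Green biset functors constructed in Propositions \thref{INF} and \thref{RES} with the formal criterion provided by \thref{ESSUPmorph}. Specifically, I would first apply \thref{ESSUPmorph} to the inflation morphism $Inf: A \longrightarrow A_G$ from \thref{INF}, which immediately yields the inclusion $\ESSUP(A_G) \subset \ESSUP(A)$. Then I would apply the same lemma to the restriction morphism $Res: A_G \longrightarrow A$ from \thref{RES}, obtaining the reverse inclusion $\ESSUP(A) \subset \ESSUP(A_G)$. Putting both inclusions together gives the desired equality.

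There is no real obstacle here: all the substantive work — verifying that $Inf$ and $Res$ really are morphisms of Green biset functors, not merely morphisms of biset functors — has already been carried out in the two preceding propositions, including the compatibility with the respective external products $\times$ and $\times^d$ and with the identity elements $\epsilon_A$ and $\epsilon_{A_G}$. The corollary is then a purely formal consequence, the key point being simply the \emph{existence} of morphisms of Green biset functors in both directions between $A$ and $A_G$, which via \thref{ESSUPmorph} forces the two-way containment of essential supports. Notably, no finer information such as $Res \circ Inf = Id_A$ is needed; either morphism alone only gives one inclusion, and here the symmetry between the two directions is what produces the equality.
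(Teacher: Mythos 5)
Your proof is correct and is precisely the argument the paper uses: apply \thref{ESSUPmorph} to $Inf:A\to A_G$ to get $\ESSUP(A_G)\subset\ESSUP(A)$, and to $Res:A_G\to A$ to get the reverse inclusion. Your closing observation that only the existence of morphisms in both directions is needed, not the identity $Res\circ Inf = Id_A$, is also accurate.
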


This corollary together with $\ESSUP(kB)=Ob(\mathcal{D})$ give a different and shorter proof of Romero \cite[Lemma 4.10]{ROM1}, which asserts that $0\neq\widehat{kB_G}(H)$ for all $H$ and $G$ in $Ob(\mathcal{D})$. We have further implications of these morphisms.

\begin{prop} \thlabel{splitAG}
Let $A$ be a Green biset functor and $G$ be in $Ob(\mathcal{D})$. There is an isomorphism of $A$-modules
$$Inf^*A_G\cong A\oplus \kappa_G,$$
where $\kappa_G$ is the kernel of $Res$ which is a bilateral ideal of $A_G$. Furthermore, for any $H$ we get an isomorphism 
$$End_{\mathcal{P}_{A_G}}(H)\cong End_{\mathcal{P}_{A}}(H)\oplus \kappa_G(H\times H)$$ 
of $End_{\mathcal{P}_A}(H)$-modules, and passing to the quotient, we get an isomorphism
$$\widehat{A_G}(H)\cong \widehat{A}(H)\oplus \widehat{\kappa_G}(H)$$
of $\widehat{A}(H)$-modules, where $\widehat{\kappa_G}(H)$ is the image of $\kappa_G(H\times H)$ in $\widehat{A_G}(H)$.
\end{prop}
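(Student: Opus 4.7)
The plan is to verify the single identity $Res \circ Inf = Id_A$ as morphisms of Green biset functors and deduce all three decompositions from it by standard splitting arguments. For the identity, I would compute at each $H$ the biset composition $Res_H^{H \times G} \circ Inf_H^{H \times G}$: unpacking the definitions (with the simplification $Res^{H \times G}_H = Iso(\rho_H) \circ Res^{H \times G}_{H \times 1}$), this is $(H \times G) \times_{H \times G} H$ with the natural biactions, and the orbit of $((x,g),h)$ admits the canonical representative $((xh, 1), 1)$, so $[((x,g),h)] \mapsto xh$ is an isomorphism of $(H, H)$-bisets onto the identity biset $H$. Applying $A$ then yields $Res_H \circ Inf_H = Id_{A(H)}$.

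With this identity in hand, $Inf \colon A \to Inf^* A_G$ and the pullback $Res \colon Inf^* A_G \to Inf^* Res^* A = (Res \circ Inf)^* A = A$ are $A$-module morphisms whose composition is the identity (any morphism of Green biset functors $f \colon A \to C$ gives an $A$-module morphism $A \to f^*C$ by the same naturality used in Section 3 to induce $\widehat{f}$). The kernel $\kappa_G$ of $Res$ is a two-sided ideal of $A_G$ by the general remark in Section 3, so the split short exact sequence of $A$-modules
$$0 \longrightarrow \kappa_G \longrightarrow Inf^* A_G \xrightarrow{Res} A \longrightarrow 0$$
immediately yields the first isomorphism $Inf^* A_G \cong A \oplus \kappa_G$.

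Evaluating at $H \times H$ gives the $k$-module decomposition $End_{\mathcal{P}_{A_G}}(H) = A_G(H \times H) \cong A(H \times H) \oplus \kappa_G(H \times H) = End_{\mathcal{P}_A}(H) \oplus \kappa_G(H \times H)$. To upgrade this to an isomorphism of $End_{\mathcal{P}_A}(H)$-modules, I would use that $P_{Inf}$ and $P_{Res}$ are $k$-algebra homomorphisms with $P_{Res} \circ P_{Inf} = Id$ (Section 3): this makes $End_{\mathcal{P}_{A_G}}(H)$ into a left $End_{\mathcal{P}_A}(H)$-module via $P_{Inf}$ and left multiplication, with the image of $P_{Inf}$ and $\kappa_G(H \times H) = \ker P_{Res}$ as complementary submodules (the latter because $\kappa_G$ is a two-sided ideal of $A_G$).

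Finally, descending to the essential algebras, Section 3 gives induced $k$-algebra maps $\widehat{Inf}_H$ and $\widehat{Res}_H$ satisfying $\widehat{Res}_H \circ \widehat{Inf}_H = Id_{\widehat{A}(H)}$, yielding $\widehat{A_G}(H) \cong \widehat{A}(H) \oplus \ker \widehat{Res}_H$ as $\widehat{A}(H)$-modules. The inclusion $\widehat{\kappa_G}(H) \subset \ker \widehat{Res}_H$ is immediate; for the converse, given $\hat\alpha \in \ker \widehat{Res}_H$ with representative $\alpha \in A_G(H \times H)$, the element $\alpha - Inf_{H \times H}(Res_{H \times H}(\alpha))$ lies in $\kappa_G(H \times H)$ and represents the same class modulo $I_{A_G}(H)$ (using that $P_{Inf}$ sends $I_A(H)$ into $I_{A_G}(H)$). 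The main obstacle is the initial biset computation verifying $Res \circ Inf = Id_A$; once that is in place, the three decompositions follow by formal module-theoretic arguments.
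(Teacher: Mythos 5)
Your proof is correct and follows essentially the same route as the paper's: establish the splitting $Res \circ Inf = Id_A$ from the biset isomorphism $Res^{H\times G}_H \circ Inf^{H\times G}_H \cong Id_H$ and then deduce all three decompositions by standard split short exact sequence arguments. You supply details the paper leaves implicit---the explicit orbit computation giving $[((x,g),h)] \mapsto xh$, and the identification $\ker \widehat{Res}_H = \widehat{\kappa_G}(H)$ via the element $\alpha - Inf_{H\times H}(Res_{H\times H}(\alpha))$, which the paper compresses into the phrase ``passing to the quotient.''
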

\begin{proof}
Since $Res_H^{H\times G}\circ Inf_H^{H\times G} \cong Id_H$ for any $H$, it follows that $Res\circ Inf=Id_A$, and in particular, $Res_H$ is surjective and $Inf_H$ is injective. Then we have an exact sequence
$$0\longrightarrow \kappa_G \longrightarrow A_G\xrightarrow{Res} A \longrightarrow 0$$
of $A_G$-modules. Since $Inf:A\longrightarrow Inf^* A_G$ is a morphism of $A$-modules, we have that
$$0\longrightarrow \kappa_G \longrightarrow Inf^*A_G\xrightarrow{Res} A \longrightarrow 0$$
splits by $Inf$ in $A-\mathcal{M}od$. Thus the first part follows. The other isomorphisms follow from the fact that 
$$Res_{H\times H}:End_{\mathcal{P}_{A_G}}(H)\longrightarrow End_{\mathcal{P}_{A}}(H)$$ 
is a surjective $k$-algebra homomorphism for the algebra estructure given by composition, and it has $Inf_{H\times H}$ as a section.
\end{proof}

\begin{rmk}
We must be aware that the decomposition $Inf^*A_G\cong A\oplus \kappa_G$ is not a decomposition of $A_G$-modules since $Inf(A)$ is not an ideal of $A_G$. However, the ideal $\kappa_G$ is still interesting since it splits the set of isomorphism classes of simple $A_G$-modules into two: the simple $A_G$-modules which are obtained by pulling back simple $A$-modules, and those which are not annihilated by $\kappa_G$.
\end{rmk}

\begin{prop}
Let $A$ be a Green biset functor and $G$ be an object of $\mathcal{D}$. 
\begin{enumerate}
    \item The pullback functor $Res^*:A-\mathcal{M}od \longrightarrow A_G-\mathcal{M}od$ is faithful and $Res^*M\cong Res^*N$ if and only if $M\cong N$ given $M$ and $N$ in $A-\mathcal{M}od$.
    \item The functor $Res^*$ induces a bijection between the set of isomorphism classes of simple $A$-modules and the set of isomorphism classes of simple $A_G$-modules annihilated by $\kappa_G$, sending the class of a simple $A$-module $S$ to the class of $Res^*S$. Moreover, if $H$ is a minimal group for $S$, then $S(H)$ is naturally a simple $\widehat{A}(H)$-module.
    \item If $S$ is a simple $A_G$-module that is not annihilated by $\kappa_G$ and $H$ is a minimal group for $S$, then $S(H)$ is a simple quotient of $\widehat{\kappa_G}(H)$ as $\widehat{A_G}(H)$-modules.
\end{enumerate}
\end{prop}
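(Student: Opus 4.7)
The plan is to exploit two facts from \thref{INF} and \thref{RES}: (i) $Res$ is split surjective on every component with section $Inf$, so every morphism in $\mathcal{P}_A$ lifts to $\mathcal{P}_{A_G}$, and (ii) $Res$ identifies $A$ with $A_G/\kappa_G$ as Green biset functors. Combined with the general structure theory of simple modules over associated categories, everything in the proposition follows.

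For Part (1), faithfulness is immediate because $\mathcal{P}_{Res}$ is the identity on objects, so $(Res^*f)_H = f_H$ for any morphism $f$ of $A$-modules. For the iso statement, the forward direction is functoriality of $Res^*$; conversely, given an $A_G$-isomorphism $\phi: Res^*M \longrightarrow Res^*N$, I would promote it to an $A$-isomorphism with the same components, checking naturality against any $\alpha \in A(K\times H)$ by lifting to some $\tilde\alpha \in A_G(K\times H)$ with $Res_{K\times H}(\tilde\alpha)=\alpha$ and invoking naturality of $\phi$ at $\tilde\alpha$.

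For Part (2), I first observe that $Res^*S$ is annihilated by $\kappa_G = \ker Res$ for every $A$-module $S$, and simplicity is preserved under $Res^*$ because the lifting argument above turns any $A_G$-submodule of $Res^*S$ into an $A$-submodule of $S$. Essential surjectivity is shown by the standard quotient construction: if $T$ is a simple $A_G$-module annihilated by $\kappa_G$, its action factors through $A_G/\kappa_G \cong A$, yielding an $A$-module $S$ with $Res^*S = T$; this is well-defined because any two lifts of $\alpha \in A(K\times H)$ differ by an element of $\kappa_G(K\times H)$ which kills $T$. Combined with Part (1), this produces the bijection. The assertion that $S(H)$ is a simple $\widehat{A}(H)$-module for minimal $H$ is the usual statement that generators of $I_A(H)$ factor through smaller groups on which $S$ vanishes, combined with simplicity and minimality.

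Part (3) is where I expect the main work. Given a simple $A_G$-module $S$ with minimal group $H$ that is not annihilated by $\kappa_G$, I would form the submodule $\kappa_G\cdot S$ defined pointwise by $(\kappa_G\cdot S)(K)=\sum_L \kappa_G(K\times L)\cdot S(L)$, which is an $A_G$-submodule because $\kappa_G$ is a left ideal. Simplicity of $S$ forces $\kappa_G\cdot S = S$. The key computation is at $H$: terms with $|L|<|H|$ vanish by minimality, and for $|L|\geq |H|$ I use the standard generation $S(L) = \sum_{\beta \in A_G(L\times H)} \beta\cdot S(H)$ to rewrite $\kappa_G(H\times L)\cdot S(L)$ as $\sum_\beta (\kappa_G(H\times L)\circ \beta)\cdot S(H)$; since $Res$ is a morphism of Green biset functors, $Res(\alpha\circ\beta) = Res(\alpha)\circ Res(\beta) = 0$ whenever $\alpha\in \kappa_G(H\times L)$, so $\alpha\circ\beta \in \kappa_G(H\times H)$. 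Hence $\kappa_G(H\times H)\cdot S(H) = S(H)$, and passing to essential algebras (elements of $I_{A_G}(H)$ annihilate $S(H)$ for the usual minimality reason) gives $\widehat{\kappa_G}(H)\cdot S(H) = S(H)$. Picking $x \in S(H)$ with $\widehat{\kappa_G}(H)\cdot x \neq 0$ and using that $\widehat{\kappa_G}(H)$ is a left ideal of $\widehat{A_G}(H)$, the cyclic submodule $\widehat{\kappa_G}(H)\cdot x$ is a nonzero $\widehat{A_G}(H)$-submodule of the simple $S(H)$, hence equals $S(H)$, so $\gamma \mapsto \gamma\cdot x$ exhibits $S(H)$ as a simple quotient of $\widehat{\kappa_G}(H)$.
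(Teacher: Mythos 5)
Your proof is correct and follows essentially the same route as the paper's: part~(1) rests on $Res$ being split surjective with section $Inf$ (the paper packages this as $Inf^*\circ Res^*=\mathrm{Id}_{A\text{-}\mathcal{M}od}$, while you argue by lifting morphisms, which is the same idea), and parts~(2)--(3) follow the paper's outline, with your proof of~(3) usefully spelling out the minimality-and-generation computation showing $\kappa_G(H\times H)\cdot S(H)=S(H)$ that the paper leaves implicit.
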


\begin{proof}
\begin{enumerate}
    \item It follows easily since $Inf^*\circ Res^*=(Res\circ Inf)^*= Id_{A-\mathcal{M}od}$.
    \item The bijection is a consequence of part 1. Now, if $S$ is annihilated by $\kappa_G$, then $S(H)$ as a simple $\widehat{A_G}(H)$-module is annihilated by $\widehat{\kappa_G}(H)$ and thus it is a simple $\widehat{A}(H)$-module.
    \item Since $S$ is not annihilated by $\kappa_G$, then $S(H)=S(\kappa_G(H\times H))(H)=\widehat{\kappa_G}(H)S(H)$. If $S(H)=\langle s\rangle$ as an $\widehat{A_G}(H)$-module, then it follows that $\widehat{\kappa_G}\longrightarrow S(H)$ given by $x\mapsto xv$ is a surjective homomorphism of $\widehat{A_G}(H)$-modules.
\end{enumerate}
\end{proof}

\begin{exx}
Let $A=\RFG$ for fields $k$ and $\F$ of characteristic zero. We know from G. \cite[Proposition 3.21]{paper} that $\ESSUP(\RFG)$ has only cyclic groups, so the isomorphisms classes of its simple modules are in bijection with the isomorphism classes of seeds. Since $\kappa_G$ is an ideal of $\RFG$, from G. \cite[Theorem 3.12]{paper} we have that $\kappa_G=I_{\mathcal{E}}$ for some $\mathcal{E}\subset c_{k\cap \F}(G)$, where $c_{k\cap \F}(G)$ is the set of $k\cap \F$-conjugacy classes of $G$ and $I_{\mathcal{E}}(H)$ is the ideal of $\RF(H\times G)$ generated by the primitive idempotents $e_D$ for $D\in c_{k\cap \F}(H\times G)$ such that the projection of $D$ to $G$ is an element of $\mathcal{E}$. Thus, $\kappa_G$ depends only on $\kappa_G(1)$. If $D\in c_{k\cap \F}(1\times G)$, then by Bouc \cite[Lemma 7.1.3, part 2]{SB1}, we have
\begin{align*}
    Res_1(e_D)(1) &=\RF(_11\times G_{1\times G})(e_D)(1)=\frac{1}{|1\times G|}\sum_{\substack{(1,g_1)\in _11\times G_{1\times G}, (1,g_2) \in 1\times G\\
(1,g_1)=(1,g_1g_2)}} e_D(1,g_2)\\
&=\frac{1}{|G|}\sum_{(1,g_1)\in _11\times G_{1\times G}} e_D(1,1)= 
     \begin{cases}
       \text{1,} &\quad\text{if $D=\{(1,1)\} $}\\
       \text{0,} &\quad\text{otherwise} \\ 
     \end{cases}
\end{align*}
proving that $\mathcal{E}=c_{k\cap\F}(G)-\{\{1\}\}$ and so $\kappa_G=I_{c_{k\cap\F}(G)-\{\{1\}\}}$. In this case, we have that $A=Res^*A$ and $I_{\{1\}}$ are isomorphic as $A_G$-modules, and therefore
$$\RFG=A\oplus \kappa_G$$
as $A_G$-modules, even though $I_{\{1\}}$ and $Inf(A)$ do not agree as subfunctors of $A_G$. When $\F=\Q$, the $\RQG$-modules are known as \textit{$G$-rhetorical biset functors}. By G. \cite[Proposition 3.23]{paper}, if $(|H|,|G|)=1$ then the map $\nu:\ER(H)\otimes_k \RQ(G)\longrightarrow \ERG(H)$ given by $\nu(\widehat{a}\otimes b)=\widehat{a\times b}$ is a $k$-algebra isomorphism. Since $\nu(\widehat{e_D}\otimes e_E)=\widehat{e_{D\times E}}$ for $D\in c_{\Q}(H\times H)$ and $E\in c_{\Q}(G)$, we obtain
$$\widehat{\kappa_G}(H)=\nu\left(\ER(H)\otimes_k \left\langle \left\{ e_E\;|\;\{1\}\neq E\in c_{\Q}(G)\right\}\right\rangle\right)=\left\langle \left\{\widehat{Id_H\times e_E}\;|\;\{1\}\neq E\in c_{\Q}(G)\right\}\right\rangle.$$
Under the bijection given in G. \cite[Corollary 3.24]{paper}, the isomorphism classes of simple $G$-rhetorical biset functors obtained from simple rhetorical biset functors via $Res^*$ correspond to classes of triplets of the form $(H,V,\{1\})$, while those which are not annihilated by $\kappa_G$ correspond to classes of those $(H,V,C)$ with $C$ a nontrivial cyclic subgroup of $G$.
\end{exx}

On the support of $\RFG$, G. \cite[Proposition 3.21]{paper} asserts that $\ESSUP(\RFG)\subset\ESSUP(\RQ)$. Here there is a slight improvement on this result.

\begin{prop} \thlabel{SUPRFG}
Let $G$ be an object of $\mathcal{D}$ and $\F$ be a field of characteristic zero. Then 
$$\ESSUP(\RF)=\ESSUP(\RFG)\subset\ESSUP(\RQG)=\ESSUP(\RQ).$$
Furthermore, if $H\in Ob(\mathcal{D})$ is a non-trivial cyclic group and $\F$ contains an $|H|$-th primitive root of $1$, then $H\in \ESZERO(\RF)$.
\end{prop}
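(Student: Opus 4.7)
The plan for the first line of (in)equalities is to combine the shifting corollary with the extension morphism. Applying \thref{shiftedSUP} to $A = \RF$ and to $A = \RQ$ gives $\ESSUP(\RF) = \ESSUP(\RFG)$ and $\ESSUP(\RQ) = \ESSUP(\RQG)$. The middle inclusion then reduces to $\ESSUP(\RF) \subseteq \ESSUP(\RQ)$, which is a direct application of \thref{ESSUPmorph} to the extension morphism $^{\F}\eta: \RQ \longrightarrow \RF$ and is already recorded as G. \cite[Proposition 3.2.1]{paper}.

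For the ``furthermore'' part, the strategy is to show $\widehat{Id_H} = 0$ in $\widehat{\RF}(H)$ by writing the identity morphism $Id_H \in \RF(H\times H)$ as a sum of endomorphisms factoring through the trivial group. Unraveling the definition, $Id_H = \RF(\overrightarrow{H})(\epsilon_{\RF})$ is the class of the regular $(H,H)$-bimodule $\F H$ seen as an $(H\times H)$-representation via $(h_1,h_2)\cdot h = h_1 h h_2^{-1}$, whose character at $(h_1,h_2)$ is $|H|$ if $h_1=h_2$ and $0$ otherwise. Since $H$ is cyclic and $\F$ contains a primitive $|H|$-th root of $1$, the field $\F$ splits $H$: all simple $\F H$-modules are $1$-dimensional and indexed by the characters $\chi: H \longrightarrow \F^\times$. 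The Wedderburn decomposition of $\F H$ as an $(\F H,\F H)$-bimodule (equivalently, column orthogonality) then gives
$$Id_H = \sum_{\chi} [V_\chi]\times [V_{\chi^{-1}}] \quad \text{in } \RF(H\times H),$$
where $V_\chi$ denotes the $1$-dimensional $\F H$-module afforded by $\chi$ and $\times$ denotes the external product $\RF(H)\times\RF(H)\longrightarrow \RF(H\times H)$.

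The final step is to observe that every summand $[V_\chi]\times[V_{\chi^{-1}}]$ factors through the trivial group $1$ in $\mathcal{P}_{\RF}$. Viewing $[V_\chi] \in \RF(H) \cong \mathcal{P}_{\RF}(1,H)$ and $[V_{\chi^{-1}}] \in \RF(H)\cong \mathcal{P}_{\RF}(H,1)$, the composition rule $\beta\circ\alpha = \RF(H\times\overleftarrow{1}\times H)(\beta\times\alpha)$, together with the canonical isomorphism $H\times 1\times 1\times H \cong H\times H$, identifies $[V_\chi]\circ [V_{\chi^{-1}}]$ with the external product $[V_\chi]\times[V_{\chi^{-1}}]$. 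Since $|H|>1$, each summand lies in $I_{\RF}(H)$, so $Id_H\in I_{\RF}(H)$, hence $\widehat{\RF}(H)=0$ and $H\in\ESZERO(\RF)$. The main obstacle is this last identification of external products with composites through the trivial group, which is essentially formal but requires tracking the biset $H\times\overleftarrow{1}\times H$ through the canonical isomorphisms.
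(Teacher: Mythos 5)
Your handling of the first chain of equalities and the middle inclusion is exactly what the paper does: shift both sides via \thref{shiftedSUP} and invoke \thref{ESSUPmorph} with the extension morphism. For the ``furthermore'' part your argument is correct but takes a genuinely different, more hands-on route. The paper invokes the splitting-field theorem (Curtis--Reiner, Theorem 10.33) to conclude that \emph{every} simple $\F(H\times H)$-module is an external product $V_1\otimes_\F V_2 = Inf^{H\times 1}_H V_1\circ Inf^{1\times H}_H V_2$, which factors through $1$; since these generate $\RF(H\times H)$, the whole endomorphism algebra lies in $I_{\RF}(H)$ and $\widehat{\RF}(H)=0$. You instead exhibit a single relation: using cyclicity and column orthogonality you write $Id_H=\sum_\chi[V_\chi]\times[V_{\chi^{-1}}]$, observe via the composition formula in $\mathcal{P}_{\RF}$ that each summand factors through $1$, and conclude $Id_H\in I_{\RF}(H)$, whence $I_{\RF}(H)=End_{\mathcal{P}_{\RF}}(H)$ because $I_A(H)$ is a two-sided ideal. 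Your approach is more elementary (no appeal to the structure theory of simple modules over $\F(H\times H)$, just orthogonality for an abelian group), while the paper's is more conceptual and shows directly that every morphism factors; both are valid and essentially cost the same, and your identification of the external product with a composite through the trivial group is indeed the same formal fact the paper uses implicitly when writing $V_1\otimes_\F V_2 \cong Inf^{H\times 1}_H V_1\circ Inf^{1\times H}_H V_2$.
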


\begin{proof}
The contention in the middle was already know to G. \cite[Proposition 3.21]{paper}, while the equalities follow immediately from \thref{shiftedSUP}. Now, if $\F$ contains a $|H|$-th primitive root of 1, then it is a splitting field for $H$, and \cite[Theorem 10.33]{CR1} implies then that each simple $\F (H\times H)$-module $V$ is isomorphic to a external product $V_1\otimes_{\F}V_2$ for simple $\F H$-modules $V_1$ and $V_2$, and this last is isomorphic to $Inf^{H\times 1}_HV_1\circ Inf^{1\times H}_HV_2$. Since the simple modules generate $\RF(H\times H)$, we obtain $\widehat{\RF}(H)=0$.
\end{proof}

\begin{rmk}
If $\F$ contains roots of unit of any order, then $\ESSUP(\RF)$ is just the isomorphism class of the trivial group, and we get \thref{clsfunctsupp} as a particular case. Since proof of G. \cite[Proposition 4.3]{paper} does not make use of the fact that $k=\C$, it can be easily adapted to prove that for general fields of characteristic zero $k$ and $\F$ with $\F$ algebraically closed, the category $\RFG-\mathcal{M}od$ is equivalent to $\RF(G)-Mod$, where the last is a semisimple category since $\RF(G)$ is isomorphic to a product of finite field extensions of $k$.
\end{rmk}

\section*{Acknowledgement}

Part of the content in this note was included in my PhD thesis at the Posgrado Conjunto en Ciencias Matemáticas UNAM-UMSNH, written with the generous guidance of my advisors Gerardo Raggi-Cárdenas and Nadia Romero and supported by CONACyT, grant 384063; a significant amount of this work was later developed during my current Postdoctoral Fellowship at the Department of Mathematics of the University of California Santa Cruz, sponsored by UCMEXUS. I want to thank to Robert Boltje for his remarks on the relation between the categories of $A-\mathcal{M}od$ and $A_G-\mathcal{M}od$ induced by pullback via $Res$.

\end{document}